\begin{document}
\title[Ill-posedness for KdV-Burgers equation]
{ Remark on the ill-posedness for KdV-Burgers equation in Fourier amalgam spaces}

\author[D. G. Bhimani, S. Haque\hfil \hfilneg] {Divyang G. Bhimani, Saikatul Haque}  % in alphabetical order

\address{D. G. Bhimani \newline
Department of Mathematics, Indian Institute of Science Education and Research, Dr. Homi Bhabha Road, Pune 411008, India}
\email{divyang.bhimani@iiserpune.ac.in}

\address{S. Haque \newline
Harish-Chandra Research Institute, Chhatnag Road, Jhunsi, 
Prayagraj (Allahabad) 211019, India}
\email{saikatulhaque@hri.res.in}

\subjclass[2000]{35Q53, 42B35} \keywords{Korteweg-de Vries-Burgers (KdV-B) equation, ill-posedness,  Fourier amalgam spaces, Fourier-Lebesgue spaces,  modulation spaces}
\begin{abstract}
We have established (a weak form of) ill-posedness for the KdV-Burgers equation on a real line in Fourier amalgam spaces $\widehat{w}_s^{p,q}$ with $s<-1$. The particular case $p=q=2$ recovers the result in \cite{molinet2002low}. The result is new even in Fourier Lebesgue space $\mathcal{F}L_s^q$ which corresponds to the case $p=q(\neq 2)$ and in modulation space $M_s^{2,q}$ which corresponds to the case $p=2,q\neq 2$.
\end{abstract}

\maketitle \numberwithin{equation}{section}
\newtheorem{theorem}{Theorem}[section]
\newtheorem{corollary}[theorem]{Corollary}
\newtheorem{lemma}[theorem]{Lemma}
\newtheorem{remark}[theorem]{Remark}
\newtheorem{problem}[theorem]{Problem}
\newtheorem{example}[theorem]{Example}
\newtheorem{definition}[theorem]{Definition}
\allowdisplaybreaks

\section{Introduction}
In this paper we consider  the Korteweg-de Vries--Burgers (KdV-B) equation posed on the real line:
\begin{equation}\label{KdV-B}
\begin{cases}
u_t + u_{xxx}-u_{xx}+uu_{x}=0\\
u(0, x)=u_0
\end{cases} (t, x)\in \mathbb R \times \mathbb R,
\end{equation}
where $u=u(t,x) \in \mathbb R.$

The  KdV-B \eqref{KdV-B}  was derived as a model for the propagation of long, weakly nonlinear dispersive waves in certain physical contexts when dissipative effects occur (see \cite{van1970}).  Many authors have studied  the short and long time behaviours of solutions of   KdV,  KdV-B  and several of its variants in the context of Sobolev spaces,  see e.g.   \cite{molinet2002low,  molinet2011sharp} and the references therein.  In recent years, there has been a great deal of interest in studying dispersive PDEs  with Cauchy data in low regularity spaces.  See e.g.  survey article \cite{mr}.  Recently,  in \cite{oh} authors  have studied KdV in modulation spaces, and in \cite{bhimani2021strong, bhimani2021norm, bhimani2022norm} authors have studied ill-poedesness for wave, BBM and NLS in Fourier amalgam spaces, see also \cite{oh2017remark,kishimoto2019remark}. 
We refer to \cite{bejenaru2006sharp} by Bejenaru-Tao for abstract well-posedness and ill-posedness theory.
However,  we note that there are no known well/ill-posedness  results  for \eqref{KdV-B} in modulation and Fourier amalgam spaces.

In this note,  we would like to  initiate the study of ill-posedness for \eqref{KdV-B} in the realm of Fourier amalgam spaces.  We now briefly recall these spaces.  In order to study well-posednes for 1D cubic nonlinear Schr\"odinger equations, in \cite{jfth, jfto},  Oh and  Forlano have  introduced the Fourier amalgam space $\widehat{w}_s^{p,q} \  (1\leq p, q \leq \infty,  s\in \mathbb R):$
$$\widehat{w}^{p,q}_s(\mathbb R)=\left \{ f\in \mathcal{S}'(\mathbb R): \|f\|_{\widehat{w}^{p,q}_s}=   \left\| \left\lVert \chi_{n+Q}(\xi)\mathcal F f(\xi)\right\rVert_{L_\xi^p(\mathbb R)} \langle n \rangle^s \right\|_{\ell^q_n(\mathbb Z)}< \infty \right\}
$$where $Q=(-1/2, 1/2]$ and $\mathcal{F}$ denote the Fourier transform.
These spaces recapture several known spaces: 
\begin{equation*} 
   \widehat{w}_s^{p,q}(\mathbb R)= \begin{cases}
   \mathcal{F}L_s^q(\mathbb R) \ \text{(Fourier-Lebesgue space)} \quad \textit{if}  \quad p=q\\
   M^{2,q}_s(\mathbb R) \ \text{(modulation space)}  \quad \textit{if}  \quad p=2\\
   H^{s}(\mathbb R) \  \text{(Sobolev space)}  \quad \textit{if} \quad p=q=2.
    \end{cases}
    \end{equation*}
    See also Remarks \ref{fw0}, \ref{fw}, \ref{fw2}.  We now state our main result.
\begin{theorem}\label{mt}
Let $s < -1$. Then there does not exist any $T > 0$ such that \eqref{KdV-B} admits a
unique local solution defined on the interval $[0, T ]$ and such that the flow-map $u_0\mapsto u(t),\ t\in[0,T]$
is $C^2$-differentiable at zero from $\widehat{w}_s^{p,q}(\mathbb R)$ to $C([0,T];\widehat{w}_s^{p,q}(\mathbb R))$.
\end{theorem}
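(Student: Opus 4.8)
The plan is to prove ill-posedness by contradiction, using the standard Bejenaru–Tao obstruction to $C^2$ smoothness of the flow map. This is the classical approach pioneered by Bourgain and refined by Molinet–Ribaud for KdV-type equations; the case $p=q=2$ is exactly \cite{molinet2002low}, so I would adapt that argument to the Fourier amalgam norm. Let me sketch this.

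\begin{proof}[Proof sketch]
First I would recall that the KdV-B equation \eqref{KdV-B} corresponds, via Duhamel's formula, to the integral equation
\begin{equation*}
u(t) = S(t)u_0 - \frac12\int_0^t S(t-t')\,\partial_x\bigl(u(t')^2\bigr)\,dt',
\end{equation*}
where $S(t)=\mathcal{F}^{-1}e^{(-it\xi^3-t\xi^2)}\mathcal{F}$ is the linear KdV-Burgers propagator, whose symbol combines the dispersive phase $it\xi^3$ with the dissipative decay $-t\xi^2$. Assuming a $C^2$ flow map at $u_0=0$, I would feed in scaled data $u_0 = \delta\phi$ and expand the solution in powers of $\delta$. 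The first variation reproduces the linear flow $S(t)\phi$, and the crucial object is the second variation
\begin{equation*}
A_2(\phi)(t) = -\int_0^t S(t-t')\,\partial_x\bigl((S(t')\phi)^2\bigr)\,dt',
\end{equation*}
which $C^2$-differentiability at zero forces to satisfy a bound $\|A_2(\phi)\|_{C([0,T];\widehat{w}_s^{p,q})}\lesssim \|\phi\|_{\widehat{w}_s^{p,q}}^2$. The strategy is to exhibit a family of test functions $\phi=\phi_N$ for which this bilinear estimate fails as $N\to\infty$, which contradicts the assumed smoothness.

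Next I would carry out the computation on the Fourier side. Writing $\widehat{\phi}=\chi_{E_N}$ for a carefully chosen frequency set $E_N$, the Fourier transform of $A_2(\phi)(t)$ at frequency $\xi$ becomes an integral over the resonance region $\xi=\xi_1+\xi_2$ weighted by $\xi$ (from $\partial_x$) and by the time integral
\begin{equation*}
\int_0^t e^{(t-t')p(\xi)}e^{t'(p(\xi_1)+p(\xi_2))}\,dt',
\end{equation*}
where $p(\xi)=-i\xi^3-\xi^2$. This integral evaluates to a factor involving the resonance function $p(\xi)-p(\xi_1)-p(\xi_2) = 3i\xi\xi_1\xi_2 + 2\xi_1\xi_2$ in the denominator. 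The key heuristic, following \cite{molinet2002low}, is to select $E_N$ so that $\phi_N$ is supported on two well-separated high-frequency blocks near $\pm N$, arranged so that the output frequency $\xi=\xi_1+\xi_2$ lies in a low-frequency region where $\langle\xi\rangle^s$ is large relative to $\langle\xi_1\rangle^s\langle\xi_2\rangle^s$. Because $s<-1$, the ratio of the output weight to the product of input weights grows like a positive power of $N$, and I would show this growth overwhelms any decay from the dissipative factor $e^{-t'\xi_1^2}e^{-t'\xi_2^2}$ after integrating over a short time interval.

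The main obstacle, and the place where the Fourier amalgam structure genuinely differs from the Sobolev case, is the bookkeeping of the three-index norm $\bigl\|\,\|\chi_{n+Q}\mathcal{F}(\cdot)\|_{L^p_\xi}\langle n\rangle^s\bigr\|_{\ell^q_n}$ rather than the single $L^2$-based $H^s$ norm. I would need to track how the $L^p$-within-a-unit-cube piece interacts with the $\ell^q$-over-cubes piece for both the input data and the output $A_2(\phi_N)$, and in particular choose the width and separation of the frequency blocks $E_N$ so that the $L^p$ and $\ell^q$ contributions are controlled simultaneously and uniformly in $p,q$. The dissipative term $e^{-t'\xi^2}$ must be handled carefully: unlike pure KdV, the Burgers dissipation smooths at high frequency, so the divergence must come from the low-output-frequency interaction where this factor is harmless. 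I expect that, as in \cite{molinet2002low}, restricting to a time interval of length $O(N^{-2})$ or localizing the frequencies appropriately neutralizes the dissipation while preserving the $N^{|s+1|}$-type growth that produces the contradiction. Once the failure of the second-order bound is established for this family, the assumed $C^2$-differentiability at zero is contradicted, proving the theorem.
\end{proof}
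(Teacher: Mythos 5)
Your proposal is correct and takes essentially the same approach as the paper: the same Bourgain/Molinet--Ribaud $C^2$-obstruction via the second Picard iterate, the same data (Fourier transform supported on two blocks near $\pm N$), and the same resonance mechanism in which the low-frequency output picks up the denominator $-2\xi_1(\xi-\xi_1)+3i\xi\xi_1(\xi-\xi_1)\sim N^2$ against input weights $\langle N\rangle^{s}$, giving failure exactly for $s<-1$. The two points you flag as obstacles are in fact benign in the paper's execution: at fixed $t\in(0,1)$ the input dissipation $e^{-2N^2t}$ \emph{helps} (it kills the oscillatory term, pinning the real part of the numerator near $-e^{-t/4}$, so no short-time restriction $t\sim N^{-2}$ is needed), and the amalgam bookkeeping is trivial because the blocks have width $2$ (so the data norm is $\sim N^{1+s}$ uniformly in $p,q$) and the lower bound on $A_2$ uses only the single cube $n=0$ via $\|a_n\|_{\ell^q_n}\geq a_0$.
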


Theorem \ref{mt} is new  for $p=q\neq 2$ and recovers the result of Molinet-Ribaud in \cite[Theorem 1.2]{molinet2002low}.   The method of proof for Theorem \ref{mt}  rely on showing the  ``unboundednes" of the second Picard iterate associated with  \ref{KdV-B}- which was initiated by Bourgain \cite{b1}  for KdV and later for mKdV by N. Tzvetkov in \cite{tz}.  See also \cite[Section 4]{bhimani2023hartree} and the references there in   for the further  comments.

We plan to address the norm-inflation (the stronger phenomenon than the mere ill-poseness) and even the worst situation  of norm inflation with infinite loss of regularity for \eqref{KdV-B} in our future works.  We note that recently  similar questions we have already addressed for Hartree,  nonlinear Schr\"odinger,  BBM and wave equations in \cite{bhimani2021strong, bhimani2021norm, bhimani2022norm}.   We also expect to develop well-posedness theory for \eqref{KdV-B} in $\widehat{w}^{p,q}$ space in the future.

We conclude this section by following  remarks. 
\begin{remark}\label{fw0}
Recall that the Fourier Lebesgue space $\mathcal{F}L_s^q$ is defined by $\{f\in\mathcal{S}'(\mathbb{R}^d):\|f\|_{\mathcal{F}L_s^q}:=\|\langle\xi\rangle^s\mathcal{F}f\|_{L^q}<\infty\}$.
\end{remark}
 
\begin{remark}\label{fw} For any given function $f$ which is locally in $B$  (Banach space) (i.e,  $gf\in B, \forall g \in C_0^{\infty}(\mathbb R^d)),$ we set $f_{B}(x)= \|f g(\cdot -x)\|_{B}.$ The Feichtinger's \cite{}  Wiener amalgam  space $W(B, C)$ endowed with the norm  $\|f\|_{W(B, C)}=\|f_{B}\|_{C}.$  The Fourier amalgam spaces is a Fourier image of particular Wiener amalgam spaces,  specifically,  $\mathcal{F}W(L^p, \ell^q_s)=\widehat{w}^{p,q}_s.$
\end{remark}

\begin{remark}\label{fw2} Let   $\rho \in \mathcal{S}(\mathbb R^d),$  $\rho: \mathbb R^d \to [0,1]$  be  a smooth function satisfying   $\rho(\xi)= 1$ if $|\xi|_{\infty}=\max(|\xi_1|,...,|\xi_d|)\leq \frac{1}{2} $ and $\rho(\xi)=0$ if $|\xi|_{\infty}\geq  1.$ Let  $\rho_n$ be a translation of $\rho,$ that is,
$ \rho_n(\xi)= \rho(\xi -n), n \in \mathbb Z^d$
and denote 
$\sigma_{n}(\xi)=
  \frac{\rho_{n}(\xi)}{\sum_{\ell\in\mathbb Z^{d}}\rho_{\ell}(\xi)},  n
  \in \mathbb Z^d.$ 
  Then the frequency-uniform decomposition operators can be defined by 
\[\square_n = \mathcal{F}^{-1} \sigma_n \mathcal{F},\qquad n\in\mathbb{Z}^d. \]
%Now %We define
The modulation $M^{p,q}_s(\mathbb R^d)$ %and  and Wiener amalgam spaces %$W^{p,q}_s(\rd)$, 
(with $1\leq p, q \leq \infty, s\in \mathbb R$) is defined by the norm:  
\begin{equation*}
M^{p,q}_s(\mathbb R^d)=\left\{f\in\mathcal{S}'(\mathbb{R}^d):\|f\|_{M^{p,q}_s(\mathbb R)}:=   \left\| \left\lVert
  \square_nf\right\rVert_{L_x^p(\mathbb R)} \langle n \rangle ^{s} \right\|_{\ell^q_n(\mathbb Z^d)} \right\}. %\quad \text{and} %\quad \|f\|_{W^{p,q}_s()}=   \left\| \left\lVert
  %\oldsquare_nf \cdot \langle n \rangle ^{s}  \right\|_{\ell^q_n(\zd)}  \right\rVert_{L_x^p(\m)} .
\end{equation*}
%\textbf{Please define modulation and Fourier Lebesgue spaces somewhere here?}
\end{remark}

\section{Proof of Theorem \ref{mt}}
The integral version of \eqref{KdV-B} is given by  
\begin{equation}\label{inteq}
u(t)=S(t)u_0-\frac{1}{2}\int_0^tS(t-\tau)\partial_x[u(\tau)]^2d\tau
\end{equation}
where $\{S(t)\}_{t\geq0}$ given by
\begin{equation}\label{sg}
\mathcal F S(t)u_0=e^{-t\xi^2+it\xi^3}\mathcal F u_0,\qquad t\geq0
\end{equation}
 is the semi-group associated to the linear part of \eqref{KdV-B}. The proof of Theorem \ref{mt} follows from the fact that the second Picard  iterate (given by \eqref{2ndit}) associated to \eqref{inteq} is not continuous  at zero from $\widehat{w}_s^{p,q}(\mathbb R)$ to $C([0,T];\widehat{w}_s^{p,q}(\mathbb R))$. We refer to the proof of Theorem 1.10 in \cite{bhimani2023hartree} and the reference therein for detail. 

\begin{proof}[Proof of Theorem \ref{mt}]
%\textbf{The following Key Ideas Might Works for  the Proof?}:
We define the sequence of initial data $\{\phi_N\}_{N\geq 1}$ by
\begin{equation*}
\mathcal F{\phi_N} = N \left( \chi_{I_N} + \chi_{I_N}(-\xi) \right)
\end{equation*}
where $I_N=[N, N+2]$ and $\mathcal F{\phi_N}$ denotes the space Fourier transform of $\phi_N.$ 

Let us compute $ \|\phi_{N}\|_{\widehat{w}^{p,q}_s}$.
Note that with $\Omega=I_N\cup (-I_N)$
\begin{align*}
 \|\phi_{N}\|_{\widehat{w}^{p,q}_s}=   \left\| \left\lVert \chi_{n+Q}(\xi)\mathcal F \phi_N\right\rVert_{L_\xi^p(\mathbb R)}  \langle n \rangle^s \right\|_{\ell^q_n(\mathbb Z)}=  N \left\| \left\lVert \chi_{n+Q}(\xi)\chi_\Omega(\xi)\right\rVert_{L_\xi^p(\mathbb R)}  \langle n \rangle^s \right\|_{\ell^q_n(\mathbb Z)}
\end{align*}
Now $\|\chi_{n+Q}(\xi)\chi_\Omega(\xi)\|_{L_\xi^p(\mathbb R)}$ survives only if $n\in \mathcal{G}:=\{m\in\mathbb Z:(m+Q)\cap\Omega\neq\emptyset\},$ and for these $n$'s one must have $|n|\sim N$. Since $\#(\mathcal{G})\sim 1$ and $\|\chi_{n+Q}(\xi)\chi_\Omega(\xi)\|_{L_\xi^p(\mathbb R)}=\|\chi_{n+Q}(\xi)\|_{L_\xi^p(\mathbb R)}=1$ for almost all $n\in\mathcal{G}$,  we conclude
\begin{eqnarray*}
 \|\phi_{0,N}\|_{\widehat{w}^{p,q}_s}&=&N \left\| \left\lVert \chi_{n+Q}(\xi)\chi_\Omega(\xi)\right\rVert_{L_\xi^p(\mathbb R)}  \langle n \rangle^s \right\|_{\ell^q_n(\mathbb Z)}\\
 &=& N\left(\sum_{n\in\mathcal{G}}\|\chi_{n+Q}(\xi)\chi_\Omega(\xi)\|_{L_\xi^p(\mathbb R)}^q\langle n \rangle^{sq}\right)^{1/q}\\
 &\sim& N(N^{sq})^{1/q}=N^{1+s}.
\end{eqnarray*}

Therefore
 $\|\phi_N\|_{\widehat{w}^{p,q}_{-1}}\sim 1$ and $\phi_N \to 0$ in $\widehat{w}^{p,q}_s(\mathbb R)$ for $s<-1.$ This sequence yields  a counter example to the continuity of the second iteration of the Picard Scheme in $\widehat{w}^{p,q}_s(\mathbb R)$ for $s<-1$ that is given by 
\begin{eqnarray}\label{2ndit}
A_2(t, h, h) = \int_0^t S(t-\tau) \partial_x[S(\tau)h]^2 d\tau
\end{eqnarray}
where $\{S(t)\}_{t\geq0}$ is defined in \eqref{sg}.  Indeed,  computing the space Fourier transform we get

\begin{eqnarray*}
 && \mathcal{F} (A_2(t, \phi_N, \phi_N))(\xi)\\
 &=&\int_0^te^{-(t-\tau)\xi^2+i(t-\tau)\xi^3}(i\xi)[\mathcal F S(\tau)\phi_N\ast\mathcal F S(\tau)\phi_N](\xi)d\tau\\
 &=&\int_0^te^{-(t-\tau)\xi^2+i(t-\tau)\xi^3}(i\xi)\int_{\mathbb R}[\mathcal F S(\tau)\phi_N](\xi-\xi_1)[\mathcal F S(\tau)\phi_N](\xi_1)d\xi_1d\tau\\
 &=&\int_0^te^{-(t-\tau)\xi^2+i(t-\tau)\xi^3}(i\xi)\int_{\mathbb R}e^{-\tau(\xi-\xi_1)^2+i\tau(\xi_xi_1)^3}\mathcal F \phi_N(\xi-\xi_1)\\
 &&\hspace{5cm}\times e^{-\tau\xi_1^2+i\tau\xi_1^3}\mathcal F \phi_N(\xi_1)d\xi_1d\tau\\
  &  =&  \int_{\mathbb R} e^{-t\xi^2} e^{it \xi^3} \mathcal F{\phi_N}(\xi_1) \mathcal F{\phi_N} (\xi-\xi_1)
(i \xi)\\
 &&\hspace{4cm}\times \int_0^t e^{- (\xi_1^2 + (\xi- \xi_1)^2 - \xi^2)\tau} e^{i (\xi_1^3 + (\xi-\xi_1)^3- \xi^3) \tau} d\tau d\xi_1\\
& =&  e^{-t\xi^2} e^{it \xi^3} (i \xi) \int_{\mathbb R}  \mathcal F{\phi_N}(\xi_1) \mathcal F{\phi_N} (\xi-\xi_1)
 \frac{e^{-(\xi_1^2 + (\xi-\xi_1)^2-\xi^2)t} e^{i3 \xi \xi_1 (\xi-\xi_1)t} - 1}{-2\xi_1 (\xi-\xi_1) + i3 \xi \xi_1 (\xi-\xi_1)}  d\xi_1.
\end{eqnarray*} 
We note that 
\begin{eqnarray}\label{1}
&&\left |\mathcal{F} (A_2(t, \phi_N, \phi_N))(\xi) \right|\nonumber\\
& = &N^{2}|\xi| \left| \int_{K_{\xi}} \frac{e^{-(\xi_1^2 + (\xi-\xi_1)^2)t} e^{i3 \xi \xi_1 (\xi-\xi_1)t} - e^{-\xi^2 t}}{-2\xi_1 (\xi-\xi_1) + i3 \xi \xi_1 (\xi-\xi_1)} d\xi_1 \right|
\end{eqnarray}
where
\[K_{\xi}=\{ \xi_1: \xi-\xi_1 \in I_N,  \xi_1\in -I_N \} \cup \{ \xi_1:  \xi_1 \in I_N,  \xi- \xi_1 \in -I_N \}. \]
Note that for any $\xi \in [-1/2, 1/2],$ one has $|K_{\xi}| \geq 1$ and 
\begin{equation*}
\begin{cases} 
3 \xi \xi_1 (\xi- \xi_1) \leq cN^2\\
2 \xi_1(\xi-\xi_1) \sim N^2
\end{cases} \forall \xi_1 \in K_\xi.
\end{equation*}
Therefore,  fixing $0<t<1,$ for $\xi\in[-1/2, 1/2],$ we have 
\[ \text{Re} \left( e^{-( \xi_1^2 + (\xi- \xi_1)^2)t)}  e^{i3\xi  \xi_1 (\xi- \xi_1)t} - e^{- \xi^2 t} \right) \leq -e^{-t/4} + e^{-2(N+2)^2 t} \]
which leads for $N=N(t)>0$ large enough (so that $e^{-2(N+2)^2 t}\leq \frac{1}{2}e^{-t/4}$) to 
\begin{eqnarray}\label{2}
\left| \int_{K_{\xi}} \frac{e^{-(\xi_1^2 + (\xi-\xi_1)^2)t} e^{i3 \xi \xi_1 (\xi-\xi_1)t} - e^{-\xi^2 t}}{-2\xi_1 (\xi-\xi_1) + i3 \xi \xi_1 (\xi-\xi_1)} d\xi_1 \right| &\geq& c \frac{e^{-t/4}}{N^2}. 
\end{eqnarray}
Now using the fact $\|a_n\|_{\ell_n^q(\mathbb Z)}\geq a_0$ we have
\begin{eqnarray*}
\|A_2(t, \phi_N, \phi_N)\|_{\widehat{w}^{p,q}_s}&=& \left\| \left\lVert \chi_{n+Q}(\xi)\mathcal F A_2(t, \phi_N, \phi_N)(\xi)\right\rVert_{L_\xi^p(\mathbb R)} \langle n \rangle^s \right\|_{\ell^q_n(\mathbb Z)}\\
&\geq&\left\lVert \chi_{0+Q}(\xi)\mathcal F A_2(t, \phi_N, \phi_N)(\xi)\right\rVert_{L_\xi^p(\mathbb R)} \\
&=&\left(\int_{(-\frac{1}{2},\frac{1}{2}]}|\mathcal F A_2(t, \phi_N, \phi_N)(\xi)|^pd\xi\right)^{1/p}.
\end{eqnarray*}
Using \eqref{1}, \eqref{2} we have
\[
\|A_2(t, \phi_N, \phi_N)\|_{\widehat{w}^{p,q}_s}\geq cN^2\frac{e^{-t/4}}{N^2}\left(\int_{(-\frac{1}{2},\frac{1}{2}]}|\xi|^pd\xi\right)^{1/p}\geq c_0
\]
%\[ \|A_2(t, \phi_N, \phi_N)\|_{\widehat{w}^{p,q}_s} \geq C_0  ?\]
for some positive  constant $c_0$ independent of $N$ and $t$.

Since  $\phi_N \to 0$ in $\widehat{w}^{p,q}_s(\mathbb R),$ for $s<-1,$ this ensure that,  for any fixed $t>0,$ the map $u_0 \mapsto A_2(t, u_0, u_0)$ is not continuous at zero from $\widehat{w}^{p,q}_s(\mathbb R)$ into $\widehat{w}^{p,q}_s(\mathbb R).$
\end{proof}

\section*{Acknowledgements}
%The research is financially supported by
S Haque  is thankful to  DST-INSPIRE (DST/INSPIRE/04/2022/001457) for  financial support. S Haque is also thankful to Harish-Chandra Research Institute for the excellent research facilities. 
%(DST/INSPIRE/04/2016/001507)

\end{document}